\tikzset{anchorbase/.style={baseline={([yshift=-0.5ex]current bounding box.center)}}}
\tikzstyle directed=[postaction={decorate,decoration={markings,
    mark=at position #1 with {\arrow{>}}}}]
\tikzstyle rdirected=[postaction={decorate,decoration={markings,
    mark=at position #1 with {\arrow{<}}}}]
 \newlength{\baseunit}               
\newtheorem{theorem}[subsubsection]{Theorem}
\newtheorem{lemma}[theorem]{Lemma}
\newtheorem{prop}[theorem]{Proposition}
\newtheorem{corollary}[subsubsection]{Corollary}
\theoremstyle{definition}
\newtheorem{remark}[theorem]{Remark}
\newtheorem{example}[subsubsection]{Example}
\numberwithin{equation}{section}
\newcommand{\E}{\mathrm{E}}
\newcommand{\GL}{\mathrm{GL}}
\newcommand{\mF}{\mathbb{F}}
\newcommand{\Spec}{\mathrm{Spec}}
\newcommand{\Ab}{\mathsf{Ab}}
\newcommand{\Rep}{\mathsf{Rep}}
\newcommand{\mN}{\mathbb{N}}
\newcommand{\mZ}{\mathbb{Z}}
\newcommand{\mC}{\mathbb{C}}
\newcommand{\End}{\mathrm{End}}
\newcommand{\perf}{\mathrm{perf}}
\begin{document}
\title[]{K-theory and perfection}
\author{Kevin Coulembier}

\date{\today}

\keywords{}

\begin{abstract}
We review some recent results on $K$-theory of perfection of commutative $\mF_p$-algebras and provide an alternative proof.
\end{abstract}

\maketitle

\section{Introduction}
Let $p$ be a prime.
In \cite{KM}, Kelly and Morrow observe that  $K(-)[1/p]$ yields an equivalence when applied to a commutative $\mF_p$-algebra $R$ and its perfection $R_{\perf}$, by making use of nilpotence-invariance. In \cite{EK}, Elmanto and Khan prove the same result as an application of their topological invariance (after inverting $p$) in moticic homotopy theory.

In combination with the classical fact that the groups $K_n(R)$, for $n>0$, are uniquely $p$-divisible for a perfect $\mF_p$-algebra, see \cite[Corollaire~5.5]{Kratzer}, this implies
\begin{equation}\label{eqMain}K_n(R_{\perf}) \;\cong\; \mZ[1/p]\otimes K_n(R),\quad n>0.\end{equation}
This is reminiscent of the isomorphism
$$\pi_n(X_{\frac{1}{p}})\;\cong\; \mZ[1/p]\otimes \pi_n(X),\quad n>0,$$
for $p'$-localisations of topological spaces as in \cite{Sullivan}. Note that the analogy has its limitations, see Remark~\ref{RemDet}.

In \cite{CW}, another analogy between the two manipulations was studied. Namely, it was proved that the $p'$-localisations of classifying spaces of compact connected Lie groups behave similarly to the perfections of the corresponding reductive groups in characteristic $p$.

In fact, another connection between algebraic perfection and topological localisation can be concluded almost immediately from the above results on $K$-theory. Quillen's functor from rings to spaces
$$\mathbf{Q}:R\;\mapsto\; \mathrm{GL}(R)\;\mapsto\; B \mathrm{GL}(R)\;\mapsto \; (B\mathrm{GL}(R))^+,$$
sends perfections of $\mF_p$-algebras to $p$'-localisations, see Theorem~\ref{Thm+}. Explicitly, $\mathbf{Q}(R_{\perf})\simeq \mathbf{Q}(R)_{\frac{1}{p}}$.

In this short note we give a direct proof of the isomorphism~\eqref{eqMain}, by showing it can be derived from an exercise with Adams operations, see Theorem~\ref{Main}. We also explain what happens in negative $K$-theory.

\subsection{Notation}We set $\mN=\{0,1,2,\cdots\}$.
For an endomorphism $f\in\End(X)$ in some category, we write $\varinjlim_fX$ for the colimit (assuming it exists) of the diagram
$$X\xrightarrow{f}X\xrightarrow{f}X\xrightarrow{f}\cdots.$$

\section{Localisation and Adams operations}

\subsection{Localisation}

We fix an integer $\ell\in\mZ$.

\subsubsection{} Denote by $\Ab_{\ell}$ the full subcategory of $\Ab$ of abelian groups on which $\ell$ acts invertibly, in other words, $\Ab_{\ell}$ is the category of $\mZ[1/\ell]$-modules. Then we have the localisation functor
$$-[1/\ell]:\,\Ab\to\Ab_{\ell},\quad A\mapsto A[1/\ell]\cong \mZ[1/\ell]\otimes A\cong \varinjlim_{\ell} A,$$
which is the left adjoint of the inclusion functor.

\begin{lemma}\label{LemEll}
For an endomorphism $\theta: A\to A$ of an abelian group $A$, the following two conditions are equivalent:
\begin{enumerate}
\item 
\begin{enumerate}
\item The endomorphism $\theta[1/\ell]$ of $A[1/\ell]$ is an automorphism; and
\item for each $a\in A$ there exists $i\in\mN$ with $\theta^i(a)\in \ell A$; and
\item if $\ell a=0$ for some $a\in A$, then there exists $i\in\mN$ with $\theta^i(a)=0$.
\end{enumerate}
\item We have an isomorphism under $A$
$$A[1/\ell]\;\cong\; \varinjlim_{\theta}A.$$
\end{enumerate}
\end{lemma}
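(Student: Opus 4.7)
The plan is to exhibit mutually inverse homomorphisms between $A[1/\ell]$ and $B := \varinjlim_\theta A$ via the respective universal properties. Write $\iota_n : A \to B$ for the colimit structure maps, satisfying $\iota_n = \iota_{n+1} \circ \theta$, and let $\tilde\theta : B \to B$ be the induced automorphism characterised by $\tilde\theta \circ \iota_n = \iota_n \circ \theta$, so that every element of $B$ has the form $\tilde\theta^{-n}(\iota_0(a))$.

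For (1) $\Rightarrow$ (2), condition (a) is precisely what allows the universal property of the colimit to produce a homomorphism $\phi : B \to A[1/\ell]$ extending the canonical map $g : A \to A[1/\ell]$, via the cocone $f_n = \theta[1/\ell]^{-n} \circ g$. In the reverse direction I would first check that $\ell$ acts invertibly on $B$: condition (b) provides $\theta^i(a) = \ell b$, so $\iota_n(a) = \iota_{n+i}(\theta^i(a)) = \ell \iota_{n+i}(b)$, giving surjectivity; and if $\ell \iota_n(a) = 0$ then $\theta^j(\ell a) = 0$ for some $j$, so applying (c) to $\theta^j(a)$ yields $\theta^{i+j}(a) = 0$ and hence $\iota_n(a) = 0$, giving injectivity. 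The universal property of $A[1/\ell]$ then produces $\psi : A[1/\ell] \to B$ extending $a \mapsto \iota_0(a)$. Both composites $\phi\psi$ and $\psi\phi$ agree with the identity on the image of $A$; since $A[1/\ell]$ and $B$ are $\mZ[1/\ell]$-modules and the composites are $\theta[1/\ell]$- (resp.\ $\tilde\theta$-)equivariant, they must be the identity.

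For (2) $\Rightarrow$ (1), given an isomorphism $\alpha : A[1/\ell] \to B$ restricting to $\iota_0$ on $A$, a short check shows $\alpha$ intertwines $\theta[1/\ell]$ with $\tilde\theta$ (both $\alpha \circ \theta[1/\ell]$ and $\tilde\theta \circ \alpha$ agree on $A$), whence $\theta[1/\ell]$ inherits the automorphism property from $\tilde\theta$, giving (a). For (b), $\ell$-invertibility on $B \cong A[1/\ell]$ writes $\iota_0(a) = \ell \iota_n(b) = \iota_n(\ell b)$, and unpacking the equality in the filtered colimit yields $\theta^k(a) = \ell \theta^{k-n}(b) \in \ell A$ for large enough $k$. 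For (c), if $\ell a = 0$ then $\ell \iota_0(a) = 0$ in $B$, hence $\iota_0(a) = 0$ by $\ell$-invertibility, and the colimit description forces $\theta^i(a) = 0$.

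The main subtlety I anticipate is keeping straight the interplay between the shift $\tilde\theta$ on $B$ and the localised $\theta[1/\ell]$ on $A[1/\ell]$, and consistently invoking the fact that homomorphisms out of a $\mZ[1/\ell]$-module are determined by restriction to $A$ together with equivariance with respect to these endomorphisms. Everything else is a routine unwinding of the definition of the filtered colimit of abelian groups along $\theta$.
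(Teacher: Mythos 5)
Your proof is correct, and it takes a genuinely different route from the paper's. The paper only proves $(1)\Rightarrow(2)$ (explicitly declining to prove the converse, which it does not need) and does so via a slick commutation of filtered colimits: after using (b) and (c) to show that $B:=\varinjlim_\theta A$ lies in $\Ab_\ell$, it writes
$$\varinjlim_\theta A\;\cong\;\varinjlim_\ell\varinjlim_\theta A\;\cong\;\varinjlim_{\theta[1/\ell]}(A[1/\ell])\;\cong\;A[1/\ell],$$
with the last step using (a). You instead build the two maps $\phi:B\to A[1/\ell]$ and $\psi:A[1/\ell]\to B$ by hand from their universal properties and check that they are inverse to each other; you also spell out the converse $(2)\Rightarrow(1)$. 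The core observation is shared — that (b) and (c) force $\ell$ to act invertibly on $B$, with the same surjectivity/injectivity argument — but where the paper then lets colimit interchange do the rest, you verify the equivariance and the identity on generators directly. Your version is longer but more elementary and self-contained, and it covers the implication the paper omits; the paper's is shorter but relies on fluency with interchange of colimits. One small point worth making explicit in your write-up: when you conclude that $\alpha\circ\theta[1/\ell]$ and $\tilde\theta\circ\alpha$ agree from their agreement on $A$, you should note that both are $\mZ[1/\ell]$-linear and that $g(A)$ generates $A[1/\ell]$ as a $\mZ[1/\ell]$-module — the same justification you already invoke for $\phi\psi$ and $\psi\phi$.
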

\begin{proof}
We prove that (1) implies (2).
From conditions (b) and (c), it follows that $\varinjlim_\theta A$ is in $\Ab_{\ell}$. Consequently
$$\varinjlim_\theta A\;\cong\; \varinjlim_{\ell}\varinjlim_\theta A\;\cong\;\varinjlim_{\theta[1/\ell]}(A[1/\ell])\;\cong\; A[1/\ell],$$
where the last isomorphism follows from assumption (a).

The other direction of the lemma, which we will not need, is proved similarly.
\end{proof}

\begin{example}
The conditions in Lemma~\ref{LemEll} are satisfied by the endomorphism 
$$\left(\begin{array}{cc}
\ell & 0 \\
1 & \ell^2
\end{array}\right):\mZ^2\to\mZ^2.
$$
\end{example}

\subsection{Adams operations}

\subsubsection{Set-up}\label{SetUp}

Let $\Lambda$ be a $\lambda$-ring, in the terminology of \cite{Yau}, that is a special $\lambda$-ring in the terminology of \cite{AT, Weibel}. Concretely, the ring $\Lambda$ is equipped with maps $$\{\lambda^n:\Lambda\to\Lambda\,|\, n\in\mN\},$$
satisfying properties (1)-(6) in~\cite[Definition~1.10]{Yau}. 
Consequently, for every $n\in\mZ_{>0}$, we have the $n$-th Adams operation 
$$\psi^n:\Lambda\to\Lambda,$$
which is a ring homomorphism.

We consider a binomial $\lambda$-subring $H_0\subset\Lambda$, for which we have a section $\varepsilon:\Lambda\to H_0$ of $\lambda$-rings (for example, $H_0$ is part of a positive structure, see \cite[Definition~II.4.2.1]{Weibel}).

We need to impose one condition on the above structure, which is that the associated {\em $\gamma$-filtration on $\Lambda$ is finite}. Explicitly, there exists $N\in\mN$ such that
$$\prod_i\lambda^{n_i}(x_i+n_i-1)=0$$
for all finite products with $x_i\in\ker\varepsilon$ and $\sum_in_i\ge N$.

%
%
%


\begin{prop}\label{PropLambda}
Consider $\Lambda$ as in \ref{SetUp}.
For every $\ell\in\mZ_{>0}$, the $\ell$-th Adams operation, restricted to a group homomorphism $\ker\varepsilon\to\ker\varepsilon$, satisfies
$$\varinjlim_{\psi^{\ell}}(\ker\varepsilon)\;\cong\; (\ker\varepsilon)[1/\ell].$$
\end{prop}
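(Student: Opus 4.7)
The plan is to reduce the claim to Lemma~\ref{LemEll} applied to $A=\ker\varepsilon$ and $\theta=\psi^\ell$, by verifying conditions (a), (b), (c) there. The main tool will be the $\gamma$-filtration $F^\bullet$ on $\ker\varepsilon$, defined by letting $F^n$ be the subgroup generated by products $\gamma^{n_1}(x_1)\cdots \gamma^{n_k}(x_k)$ with $x_i\in\ker\varepsilon$ and $\sum n_i\ge n$, where $\gamma^n(x)=\lambda^n(x+n-1)$. The finiteness hypothesis in~\ref{SetUp} is precisely that $F^N=0$ for some $N\in\mN$, while by construction $F^1=\ker\varepsilon$.

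The key structural input is the classical identity $\psi^\ell(x)\equiv \ell^n x\pmod{F^{n+1}}$ for $x\in F^n$ (see e.g.\ \cite[V.(4.10)]{AT} or the corresponding statement in \cite{Weibel}). In particular $\psi^\ell$ preserves the filtration and acts on the graded piece $F^n/F^{n+1}$ by multiplication by $\ell^n$.

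With this in hand, the verification of Lemma~\ref{LemEll}(1) is routine. For~(a), an induction on $n$ (going downwards from $N$) using the short exact sequences $0\to F^{n+1}\to F^n\to F^n/F^{n+1}\to 0$ shows that $\psi^\ell[1/\ell]$ is an automorphism of each $F^n[1/\ell]$, since it is so on each graded piece where it acts by the unit $\ell^n\in\mZ[1/\ell]^\times$ (for $n\ge 1$). For~(b), I claim that for every $a\in\ker\varepsilon=F^1$ one has
$$(\psi^\ell)^i(a)\;\in\; \ell A+F^{i+1}\qquad\text{for all }i\ge 0.$$
Indeed, for $i=0$ this is trivial, and if $(\psi^\ell)^i(a)=\ell b+c$ with $c\in F^{i+1}$, then $(\psi^\ell)^{i+1}(a)=\ell\,\psi^\ell(b)+\psi^\ell(c)$, and the displayed identity gives $\psi^\ell(c)\in \ell^{i+1}F^{i+1}+F^{i+2}\subset \ell A+F^{i+2}$. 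Taking $i=N-1$ yields $(\psi^\ell)^{N-1}(a)\in \ell A$. For~(c), if $\ell a=0$ then $\ell^n a=0$ for all $n\ge 1$, so the same identity gives $\psi^\ell(a)\in F^2$, and $\psi^\ell(a)$ is again annihilated by $\ell$ since $\psi^\ell$ is a ring homomorphism; iterating, $(\psi^\ell)^i(a)\in F^{i+1}$, so $(\psi^\ell)^{N-1}(a)=0$.

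The main obstacle is essentially bookkeeping: one must be slightly careful that the congruence $\psi^\ell(x)\equiv \ell^n x\pmod{F^{n+1}}$ really holds for the version of the $\gamma$-filtration on $\ker\varepsilon$ used here, but this is standard in any of the references \cite{AT,Weibel,Yau}. Once (a), (b), (c) are in place, Lemma~\ref{LemEll} supplies the desired isomorphism $\varinjlim_{\psi^\ell}\ker\varepsilon\cong (\ker\varepsilon)[1/\ell]$.
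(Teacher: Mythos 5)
Your proposal is correct and takes essentially the same route as the paper: introduce the $\gamma$-filtration, cite the fact that $\psi^\ell$ acts as $\ell^n$ on $F^n/F^{n+1}$, and deduce the hypotheses of Lemma~\ref{LemEll}(1). The only difference is that the paper leaves the verification of (a), (b), (c) implicit, while you spell it out correctly.
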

\begin{proof}
Consider the $\gamma$-filtration
$$\Lambda=F^0_\gamma\Lambda\;\supset\; F^1_\gamma\Lambda=\ker\varepsilon\;\supset\; F^2_\gamma\Lambda\;\supset\;\cdots\;\supset F^N_\gamma\Lambda=0,$$
with $F^n_\gamma$ the abelian group (the ideal) generated by $\prod_i\lambda^{n_i}(x_i+n_i-1)=0$
for all finite products with $x_i\in\ker\varepsilon$ and $\sum_in_i\ge n$.

It follows from \cite[Propositions 5.4 and 5.5]{AT} or \cite[Proposition~II.4.9]{Weibel} that on $F^n_\gamma/F^{n+1}_\gamma$, with $n\in\mN$, the Adams operation $\psi^\ell$ acts as $\ell^n$. Consequently, the action of $\psi^\ell$ on $\ker\varepsilon$ satisfies the conditions in Lemma~\ref{LemEll}(1), from which the conclusion follows.
\end{proof}

\begin{remark}
\begin{enumerate}
\item Clearly, Proposition~\ref{PropLambda} remains valid for direct limits of $\Lambda$-rings as in \ref{SetUp}, which is a weakening of assumptions.
\item However, the condition that the $\gamma$-filtration be finite cannot be lifted completely. To give a concrete example, consider the $\lambda$-ring
$$R(C_2)= K_0(\Rep_{\mC}C_2)\;\cong\;\mZ[x]/(x^2-1),\quad\mbox{with}\quad \;\ker\varepsilon\;=\; \mZ(1-x)\;\cong\;\mZ,$$
for $\varepsilon: R(C_2)\to\mZ$ coming from the dimension.
Here $\psi^2=0$ on $\ker\varepsilon$, so the conclusion of Proposition~\ref{PropLambda} is not valid. 
\end{enumerate}

\end{remark}

\section{Perfection and $K$-theory}

\subsection{Main result}\label{SecMain}

\subsubsection{} For a commutative ring $R$, we let Let $\widetilde{K}_0(R)$ denote the (split) kernel of the rank \begin{equation}\label{eqH0}K_0(R)\to H_0(R)=[\Spec R,\mZ],\end{equation} see \cite[Definition II.2.3]{Weibel}. For $n>0$, we just set $\widetilde{K}_n=K_n$.

Fix a prime $p$. For a commutative $\mF_p$-algebra $R$, its perfection is defined as
$$R_{\perf}\;:=\; \varinjlim_{Fr}R,$$
where $Fr:R\to R$ is the Frobenius homomorphism $r\mapsto r^p$.

\begin{theorem}\label{Main}
Let $R$ be a commutative $\mF_p$-algebra. For every $n\in\mN$, we have isomorphisms, natural in $R$,
$$\widetilde{K}_n(R_{\perf})\;\cong\; \widetilde{K}_n(R)[1/p].$$
In other words, $\widetilde{K}_n(Fr)$ satisfies the properties in Lemma~\ref{LemEll}.
\end{theorem}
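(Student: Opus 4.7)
The plan is to apply Proposition~\ref{PropLambda} to the $\lambda$-ring $K_*(R)$, after identifying the endomorphism $\widetilde{K}_n(Fr)$ induced by Frobenius with the $p$-th Adams operation $\psi^p$, and after using that Quillen $K$-theory commutes with filtered colimits of rings.

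Concretely, I would proceed in three steps. First, recall that for a commutative ring $R$ the total $K$-theory $K_*(R)$ carries a $\lambda$-ring structure with a positive structure, equipped with a section $\varepsilon:K_*(R)\to H_0(R)$ whose kernel is $\bigoplus_n\widetilde{K}_n(R)$; in particular the Adams operations $\psi^p$ act on each $\widetilde{K}_n(R)$. Second, invoke the classical fact that on a commutative $\mF_p$-algebra the endomorphism $K_*(Fr)$ agrees with $\psi^p$; via the splitting principle this comes down to the identity $Fr^*L\cong L^{\otimes p}$ on line bundles, which is exactly the defining property of $\psi^p$ on line bundle classes. Third, since $K$-theory commutes with filtered colimits of rings,
$$\widetilde{K}_n(R_{\perf})\;\cong\;\varinjlim_{\widetilde{K}_n(Fr)}\widetilde{K}_n(R)\;\cong\;\varinjlim_{\psi^p}\widetilde{K}_n(R).$$
Proposition~\ref{PropLambda} with $\ell=p$ would then identify the right-hand side with $\widetilde{K}_n(R)[1/p]$, and its proof in fact establishes conditions (a)--(c) of Lemma~\ref{LemEll}(1) for $\psi^p$, which under the identification $\widetilde{K}_n(Fr)=\psi^p$ gives the ``in other words'' reformulation stated in the theorem.

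The main obstacle is that Proposition~\ref{PropLambda} requires the $\gamma$-filtration on $\Lambda$ to be finite, and this hypothesis generally fails for $K_*(R)$. The fix, anticipated by the first part of the remark following Proposition~\ref{PropLambda}, is to present $K_*(R)$ as a filtered colimit of $\lambda$-rings to which the proposition does apply. Writing $R=\varinjlim_\alpha R_\alpha$ as a filtered colimit of finitely generated $\mF_p$-subalgebras (which are Noetherian of bounded Krull dimension, so that the relevant $\gamma$-filtrations are finite---classically on $K_0$, and the point to check with some care on higher $K$-theory), one obtains $\widetilde{K}_n(R)\cong\varinjlim_\alpha\widetilde{K}_n(R_\alpha)$ with the action of $\psi^p$ compatible with the transition maps. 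Since conditions (a)--(c) of Lemma~\ref{LemEll} transfer across filtered colimits of abelian groups, the direct-limit form of Proposition~\ref{PropLambda} closes the argument, and naturality in $R$ is immediate from the naturality of every construction used.
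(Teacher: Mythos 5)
Your proposal is correct and follows essentially the same route as the paper: reduce to finitely generated $\mF_p$-algebras via filtered colimits, identify the Frobenius action on $K$-theory with $\psi^p$, and invoke Proposition~\ref{PropLambda} using the finiteness of the $\gamma$-filtration for Noetherian rings of finite Krull dimension. The only cosmetic difference is that the paper works with the $\lambda$-ring $K_0(R)\times K_n(R)$ for each fixed $n$ rather than the total $K_*(R)$, which slightly streamlines the finiteness check (Lemma~\ref{Lem1}, via So\'ul\'e and the fact that products vanish on $K_n$ for $n>0$).
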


\begin{remark}\label{RemMain}
An alternative formulation of the theorem is
$$K_n(R_{\perf})\cong\begin{cases}
H_0(R)\oplus \widetilde{K}_0(R)[1/p]&\mbox{if $n=0$,}\\
K_n(R)[1/p]&\mbox{if $n>0$.}
\end{cases}
$$
\end{remark}

The remainder of the section is devoted to the proof, which merely consists of gathering classical results and applying the observations from the previous section.

\begin{lemma}\label{Lem1}
Let $R$ be a noetherian commutative ring with finite Krull dimension (e.g. $R$ is finitely generated) and take $n\in \mZ_{>0}$.
The $\lambda$-ring $K_0(R)$, resp. $K_0(R)\times K_n(R)$, has a structure as in \ref{SetUp}, and $\ker\varepsilon =\widetilde{K}_0(R)$, resp. 
$$\ker \varepsilon \;=\; \widetilde{K}_0(R) \times K_n(R).$$
\end{lemma}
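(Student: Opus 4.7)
The plan is to verify the four ingredients of~\ref{SetUp} in each of the two cases and then read off the kernel; most of this reduces to standard $K$-theoretic bookkeeping.

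For $\Lambda=K_0(R)$ the $\lambda$-ring structure is the classical one arising from exterior powers of finitely generated projective modules, and it is special, see e.g.~\cite[Chapter~II]{Weibel}. The ring $H_0(R)=[\Spec R,\mZ]$ of locally constant integer-valued functions is a torsion-free $\mZ$-algebra in which the congruence $f(f-1)\cdots(f-n+1)\in n!\,H_0(R)$ holds pointwise, so it is binomial. Since $R$ is noetherian, $\Spec R$ has finitely many connected components, and the rank map $\varepsilon$ is split as a $\lambda$-ring map by sending each locally constant function to the class of the corresponding free module on each component; the kernel is $\widetilde{K}_0(R)$ by~\eqref{eqH0}. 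Finiteness of the $\gamma$-filtration, $F^{N_0}_\gamma K_0(R)=0$ for some $N_0<\infty$ bounded in terms of $\dim R$, is the classical theorem of Grothendieck, see~\cite[\S V.3]{Weibel}.

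For $\Lambda=K_0(R)\oplus K_n(R)$ with $n\in\mZ_{>0}$, I would use the $\lambda$-module structure on $K_n(R)$ over $K_0(R)$ due to Kratzer, see~\cite{Kratzer} or~\cite[Chapter~IV]{Weibel}, which turns $K_0(R)\oplus K_n(R)$ into a special $\lambda$-ring with $K_n(R)$ as a square-zero ideal. The augmentation and its section extend via projection onto the $K_0(R)$-factor, and the identification $\ker\varepsilon=\widetilde{K}_0(R)\times K_n(R)$ is then immediate.

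The main obstacle is to verify that the $\gamma$-filtration on this augmented ring remains finite. The key observation is that $K_n(R)\cdot K_n(R)=0$: expanding
\[
\prod_i\gamma^{n_i}(a_i+b_i)\;=\;\prod_i\sum_{k_i+l_i=n_i}\gamma^{k_i}(a_i)\,\gamma^{l_i}(b_i)
\]
with $a_i\in\widetilde{K}_0(R)$ and $b_i\in K_n(R)$, the square-zero property kills every term with two or more nonzero $l_i$'s, leaving only $\prod_i\gamma^{n_i}(a_i)\in F^m_\gamma K_0(R)$ together with summands of the form $\gamma^{l_j}(b_j)\cdot P$ with $P\in F^{m-l_j}_\gamma K_0(R)$. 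Using the companion classical finiteness of the $\gamma$-filtration on $K_n(R)$ as a $\lambda$-module (so $\gamma^{l_j}(b_j)=0$ once $l_j\geq N_1$, some $N_1<\infty$; see~\cite{Kratzer}), one sees that for $m\geq N_0+N_1$ every surviving summand vanishes: either its $K_0$-factor lies in $F^{\geq N_0}_\gamma K_0(R)=0$ when $l_j<N_1$ so $m-l_j\geq N_0$, or its $K_n$-factor is already zero. This yields a finiteness bound $N\leq N_0+N_1$ as required by~\ref{SetUp}.
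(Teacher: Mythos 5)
Your proof fills in, quite carefully, the details that the paper's proof leaves as a string of citations; the approach is the same (identify the $\lambda$-ring structure and binomial augmentation from Weibel/Kratzer, then reduce finiteness of the $\gamma$-filtration on $K_0(R)\oplus K_n(R)$ to the two classical finiteness statements by exploiting that $K_n(R)$ is a square-zero ideal). Two small points. First, your reference for finiteness of the $\gamma$-filtration on $K_n(R)$ is off: Kratzer constructs the $\lambda$-structure, but the nilpotence of the $\gamma$-filtration on higher $K$-theory of a noetherian ring of finite Krull dimension is due to Soul\'e (Th\'eor\`eme~1.ii of \cite{Soule}), which is what the paper cites. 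Second, your expansion of $\prod_i\gamma^{n_i}(a_i+b_i)$ uses, implicitly, that $\gamma^l(b)\in K_n(R)$ for $b\in K_n(R)$ and $l\ge 1$, so that two such factors annihilate each other; this does hold because $\lambda^i(K_n(R))\subset K_n(R)$ for $i\ge 1$ in Kratzer's construction (and $\gamma^l$ is then an integer combination of $\lambda^1,\dots,\lambda^l$ with no $\lambda^0$ term), but it deserves a word. With those two emendations your argument is a correct and helpfully explicit version of what the paper's terse proof is gesturing at.
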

\begin{proof}
The $\lambda$-ring structures are defined in \cite{Weibel}, with $\varepsilon$ as in \eqref{eqH0} (and $K_n(R)\subset \ker\varepsilon$). That the $\gamma$-filtration is finite follows from \cite[Th\'eor\`eme~1.ii]{Soule} (and that fact that multiplication is zero on $K_n(R)$) and \cite[Example~II.4.8.2]{Weibel}
\end{proof}

The following is \cite[Proposition~5.4]{Kratzer} or \cite[Ex. II.4.2 and Corollary IV.5.5.2]{Weibel}.

\begin{lemma}\label{Lem2}
Let $R$ be a commutative $\mF_p$-algebra, then the induced action of $Fr$ on $K$-theory corresponds to the Adams operation $\psi^p$ for the $\lambda$-ring structures in Lemma~\ref{Lem1}
\end{lemma}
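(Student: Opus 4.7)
Both $Fr^*$ and $\psi^p$ are ring endomorphisms of the $\lambda$-ring $K_0(R)$, respectively $K_0(R)\times K_n(R)$, and each preserves the splitting of the underlying abelian group into the $K_0$ and $K_n$ components (the latter because $K_n$ is a square-zero ideal and both operations are ring homomorphisms sending the identity to itself). It therefore suffices to check agreement on each component separately.

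For the component $K_0(R)$ I would invoke the splitting principle coming from the positive structure of \cite[II.4.2]{Weibel}: any ring endomorphism of $K_0(R)$ is determined by its values on classes of line bundles. For a line bundle $L$, the Newton-identity definition of the Adams operation yields $\psi^p([L]) = [L^{\otimes p}]$. For the Frobenius pullback, $Fr^*L = R\otimes_{R,Fr}L$ is computed locally: if $L$ is trivialised on a Zariski cover with transition functions $g_{ij}\in R_{ij}^\times$, then $Fr^*L$ has transition functions $Fr(g_{ij})=g_{ij}^p$, which also define $L^{\otimes p}$. Hence $Fr^*([L])=[L^{\otimes p}]=\psi^p([L])$, and the identity propagates to all of $K_0(R)$ by the splitting principle.

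For the component $K_n(R)$ with $n>0$ the argument is more delicate, since there is no elementary splitting principle at our disposal. My plan is to compare explicit geometric models: realise $\psi^p$ on $B\mathrm{GL}(R)^+$ by a natural endomorphism built from exterior power functors applied to projective modules (in the style of Hiller or Gillet--Soul\'e), and observe that for an $\mF_p$-algebra this endomorphism agrees, up to homotopy, with the map on $B\mathrm{GL}(R)^+$ induced by applying $Fr$ entrywise to matrices of $\mathrm{GL}(R)$; the conclusion then follows by passing to $\pi_n$. Alternatively, one can appeal directly to \cite[Proposition~5.4]{Kratzer} or \cite[Corollary~IV.5.5.2]{Weibel}, where precisely this identification is carried out.

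The main obstacle is therefore the $K_n$ component for $n>0$: the $K_0$ case reduces cleanly to an identity of line bundles via the splitting principle, whereas the higher case requires either a uniqueness-type statement for Adams operations on $K_*(R)$ or a careful comparison at the level of infinite loop space machinery — neither of which admits a genuinely elementary presentation, which is presumably why the authors reduce the statement to citations.
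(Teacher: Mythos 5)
The paper itself gives no argument for this lemma: it is stated as "the following is \cite[Proposition~5.4]{Kratzer} or \cite[Ex.~II.4.2 and Corollary~IV.5.5.2]{Weibel}," and your proposal ultimately lands on exactly the same citations, so the route is essentially identical. One small caution on your $K_0$ sketch: the assertion that ``any ring endomorphism of $K_0(R)$ is determined by its values on line bundles'' overstates the splitting principle. What the splitting principle actually gives is that two \emph{natural} operations agreeing on line bundles agree on all of $K_0$ (one pulls back along a flag bundle where the class splits and uses injectivity of that pullback); naturality is what makes the argument run, and it holds here precisely because the absolute Frobenius of an $\mF_p$-scheme commutes with every morphism of $\mF_p$-schemes, so $Fr^*$ is natural in the required sense just as $\psi^p$ is. With that correction the $K_0$ part is sound, and your acknowledgement that the $K_{n>0}$ case requires the machinery of Kratzer/Hiller (or Weibel's treatment) matches the paper's decision to defer entirely to those sources.
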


\begin{proof}[Proof of Theorem~\ref{Main}]
As $K$-theory commutes with direct limits, we have two consequences.
Firstly, by Lemma~\ref{Lem2},
$$\widetilde{K}_\ast(R_\perf)\;\cong\;\varinjlim_{\psi^p} \widetilde{K}_\ast(R).$$
Secondly, it is sufficient to prove the theorem for finitely generated $\mF_p$-algebras. The claim thus follows from application of Proposition~\ref{PropLambda}, thanks to Lemma~\ref{Lem1}.
\end{proof}

\begin{remark}\label{RemDet}
For the sake of completeness, we point out that $K$-theory cannot `detect' whether an $\mF_p$-algebra is perfect. For example, we have
$$K_\ast(\mF_p[t])\;\cong\; K_\ast(\mF_p).$$
On the other hand, when $Fr$ is not injective on $R$, then clearly $K_1(R)$ has $p$-torsion.
\end{remark}

\subsection{Topological localisation}

\subsubsection{}\label{simple}
Following \cite[Chapter 2]{Sullivan}, a topological space $X$ is `simple' if 
\begin{enumerate}
\item $X$ is connected and homotopy equivalent to a CW-complex;
\item $\pi_1(X)$ is abelian;
\item $\pi_1(X)$ acts trivially on $\pi_\ast(\widetilde{X})$ and $H_\ast(\widetilde{X})$ for the universal cover $\widetilde{X}\to X$.
\end{enumerate}

For a prime $p$ and a simple space $X$, the $p'$-localisation (localisation `away from $p$') is a map $X\to X_{\frac{1}{p}}$ of simple spaces which is universal (up to homotopy) with respect to $p'$-local simple spaces. The latter are characterised by $\pi_i$, or equivalently $H_i$, taking values in $\Ab_{p}$ for $i>0$. Existence is proved in \cite[Chapter 2]{Sullivan}.

\begin{theorem}\label{Thm+}
Let $R$ be a commutative $\mF_p$-algebra. We have a homotopy equivalence
$$(B\mathrm{GL}(R_\perf))^+\;\simeq\; ((B\mathrm{GL}(R))^+)_{\frac{1}{p}}.$$
\end{theorem}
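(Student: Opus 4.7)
The plan is to check directly that the canonical map
$$\phi\colon (B\mathrm{GL}(R))^+\;\longrightarrow\;(B\mathrm{GL}(R_\perf))^+,$$
induced by functoriality of Quillen's construction from $R\to R_\perf$, realises its target as the Sullivan $p'$-localisation of its source. By the universal characterisation recalled in \ref{simple}, this reduces to verifying that both spaces are simple, that the target is $p'$-local, and that $\pi_n(\phi)$ is identified with the algebraic localisation $K_n(R)\to K_n(R)[1/p]$ for every $n\ge 1$.

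Simplicity on both sides is automatic: for any ring $S$, the space $(B\mathrm{GL}(S))^+$ is an infinite loop space (the degree-zero term of the connective $K$-theory spectrum of $S$), hence an H-space, so its $\pi_1=K_1(S)$ is abelian and acts trivially on all higher homotopy groups. For the induced map on homotopy groups, since $R_\perf=\varinjlim_{Fr}R$ and $K$-theory commutes with filtered colimits of rings, $\pi_n(\phi)$ is for each $n\ge 1$ the canonical colimit map $K_n(R)\to \varinjlim_{K_n(Fr)} K_n(R)$. By Lemma~\ref{Lem2}, the transition map $K_n(Fr)$ is the Adams operation $\psi^p$. Proposition~\ref{PropLambda} (applied with $\ell=p$) together with Lemma~\ref{LemEll} then identifies this colimit with $K_n(R)[1/p]$ in such a way that the colimit map becomes the algebraic localisation; in particular every homotopy group of the target is a $\mZ[1/p]$-module, so $(B\mathrm{GL}(R_\perf))^+$ is $p'$-local.

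The only delicate point, where some care is needed, is the passage between the algebraic condition on homotopy groups and Sullivan's topological notion of $p'$-local: one must cite that a simple space is $p'$-local in the sense of \ref{simple} precisely when all its $\pi_n$ are $\mZ[1/p]$-modules, and that any map of simple spaces which induces $\mZ[1/p]$-localisation on every homotopy group is a $p'$-localisation. Both facts are built into the construction of \cite[Chapter 2]{Sullivan}, and once invoked the three conditions above conclude the proof.
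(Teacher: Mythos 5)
Your proposal follows the same overall strategy as the paper — establish simplicity of both spaces, identify $\pi_n$ of the map with the algebraic $p'$-localisation, and invoke Sullivan's universal characterisation — but there are two points where your argument diverges or has a small hole.

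On simplicity, you argue that $(B\mathrm{GL}(S))^+$ is an infinite loop space, hence an $H$-space, and deduce simplicity. This is a cleaner shortcut than the paper's, which develops Lemmas~\ref{LemHom1} and \ref{LemHom2} to establish triviality of the $K_1$-action on (group/singular) homology of the universal cover and then invokes the $H$-space structure separately for the remaining part of condition~\ref{simple}(3). Note, though, that condition~\ref{simple}(3) requires triviality of the $\pi_1$-action on \emph{both} $\pi_\ast(\widetilde{X})$ \emph{and} $H_\ast(\widetilde{X})$; you only state the homotopy part. Since a connected $H$-space is simple in the full sense (both actions are trivial), this is an omission rather than an error, but you should make it explicit — and in fact one only needs the elementary $H$-space structure on $B\GL(A)^+$ from \cite[Exercise IV.1.11]{Weibel}, not the heavier infinite-loop-space machinery.

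On the identification of $\pi_n(\phi)$, there is a genuine gap: you re-derive the isomorphism $K_n(R_\perf)\cong K_n(R)[1/p]$ by applying Proposition~\ref{PropLambda} directly to $K_n(R)$ for an arbitrary $\mF_p$-algebra $R$. But Proposition~\ref{PropLambda} requires the set-up of \ref{SetUp}, in particular a \emph{finite} $\gamma$-filtration, which is only guaranteed (via Lemma~\ref{Lem1}) for noetherian rings of finite Krull dimension. The paper's proof of Theorem~\ref{Main} handles this by observing that $K$-theory commutes with filtered colimits of rings to reduce to the finitely generated case, then applies Proposition~\ref{PropLambda}. You invoke continuity of $K$-theory only to compute $K_n(R_\perf)$ as a colimit, not to reduce $R$ itself; without that reduction the finite-filtration hypothesis is not available. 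The simplest fix is to cite Theorem~\ref{Main} directly, as the paper does, rather than re-proving it.
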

\begin{proof}
By Theorem~\ref{Main}, $R\to R_{\perf}$ induces isomorphisms, for all $n>0$,
$$K_n(R)[1/p] =\mZ[1/p]\otimes\pi_n(B\mathrm{GL}(R)^+)\;\xrightarrow{\cong}\; \pi_n(B\mathrm{GL}(R_{\perf})^+)=K_n(R_{\perf}).$$
That $B\mathrm{GL}(R)^+\to B\mathrm{GL}(R_\perf)^+$ is a $p'$-localisation thus follows from~\cite[Theorem~2.1]{Sullivan}.
For completeness we need that $B\mathrm{GL}(R)^+$ is a simple space, see Corollary~\ref{CorSimp} below.
\end{proof}

\begin{lemma}\label{LemHom1}
For a ring $A$ consider $\mathrm{E}(A)=[\mathrm{GL}(A),\mathrm{GL}(A)]<\mathrm{GL}(A)$ the (normal) subgroup generated by elementary matrices. The canonical action of $K_1(R)=\mathrm{GL}(A)/\mathrm{E}(A)$ on homology groups $H_\ast(\E(A);\mZ)$ is trivial.
\end{lemma}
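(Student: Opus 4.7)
The plan is to prove directly that for every $g\in\mathrm{GL}(A)$, the conjugation automorphism $c_g:\E(A)\to\E(A)$ induces the identity on $H_\ast(\E(A);\mZ)$. This will suffice, since inner automorphisms (conjugations by elements of $\E(A)$) already induce the identity on group homology for any group, so the $K_1(A)=\mathrm{GL}(A)/\E(A)$-action is well-defined and it is enough to check it on arbitrary lifts.

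Pick $g\in\mathrm{GL}_N(A)$ and introduce the shift endomorphism $\sigma:\E(A)\to\E(A)$ sending $y\in\mathrm{E}_m(A)$ to $\mathrm{diag}(1_N,y)\in\mathrm{E}_{N+m}(A)$. The driving observation is that $g$, embedded in $\mathrm{GL}(A)$ as $\mathrm{diag}(g,1)$, acts nontrivially only on the first $N$ coordinates, whereas $\sigma(y)$ is the identity there. So $g$ and $\sigma(y)$ commute in $\mathrm{GL}(A)$, giving $c_g\circ\sigma=\sigma$ as group endomorphisms of $\E(A)$. Passing to homology yields $(c_g)_\ast\circ\sigma_\ast=\sigma_\ast$, and it remains to verify that $\sigma_\ast$ is the identity on $H_\ast(\E(A))$.

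For this last step I would compare $\sigma$ with the standard inclusion $\iota:y\mapsto\mathrm{diag}(y,1_N)$, whose colimit over $m$ is literally the identity on $\E(A)$. After one further stabilisation, both $\sigma(y)$ and $\iota(y)$ sit inside $\mathrm{E}_{N+m+1}(A)$ and differ by conjugation by a single element $J':=\mathrm{diag}(J_m,(-1)^{mN})$, where $J_m$ is the block-swap permutation exchanging the first $m$ and the last $N$ coordinates. By construction $J'$ is a signed permutation matrix of determinant $+1$, hence lies in $\E(A)$ by Whitehead's lemma (using in particular that $\bigl(\begin{smallmatrix}0 & 1 \\ -1 & 0\end{smallmatrix}\bigr)\in\mathrm{E}_2(A)$). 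Therefore $c_{J'}$ is an inner automorphism of $\E(A)$, which forces $\sigma_\ast=\iota_\ast=\id$ and completes the argument.

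The main delicate point is the sign correction $(-1)^{mN}$: the naive block-swap $J_m$ has determinant $(-1)^{mN}$ and so need not lie in $\E$, and the extra coordinate introduced in the stabilisation serves precisely to absorb this sign and land $J'$ inside $\E(A)$ rather than merely $\mathrm{GL}(A)$. Everything else is formal once the commutation $c_g\circ\sigma=\sigma$ has been noticed.
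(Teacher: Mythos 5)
Your proof is correct and follows essentially the same approach as the paper: both rely on the colimit $H_\ast(\E(A))=\varinjlim_n H_\ast(\E_n(A))$ together with the Whitehead lemma to move $g$ away from the finite-level support of the class. The paper replaces $g\in\GL_n(A)$ by the commuting representative $\mathrm{diag}(1_n,g)$ in the same coset (citing Weibel's Example~III.1.2.1), whereas you package the same shift as the endomorphism $\sigma$ and verify by hand, via the sign-corrected block-swap $J'$, that $\sigma_\ast=\id$ on homology; these are dual phrasings of the same argument.
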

\begin{proof}
Take an arbitrary 
$$x\in H_i(\E(A);\mZ)\simeq \varinjlim_n H_i(\E_n(A);\mZ).$$
We can thus assume $x$ comes from $H_i(\E_n(A);\mZ)$ for some $n$. Now take a coset $g\E(A)$. Potentially after replacing $n$ with a higher number, the representative $g$ belongs to $\GL_n(A)< \GL(A)$. By \cite[Example~III.1.2.1]{Weibel}, we can replace $g$ by an element of $\GL_{2n}(A)<\GL(A)$
which commutes with $\GL_n(A)<\GL(A)$, so in particular with $\E_n(A)$. It thus follows that the action of $g\E(A)$ on $x$ is trivial. 
\end{proof}

\begin{lemma}\label{LemHom2}Let $A$ be a ring.
The action of $K_1(A)=\pi_1(B\GL(A)^+)$ on the singular homology groups of the universal covering space of $B\GL(A)^+$ is trivial.
\end{lemma}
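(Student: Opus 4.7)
The plan is to identify the universal cover of $B\GL(A)^+$ with the plus construction $B\E(A)^+$, and then translate the deck-transformation action of $K_1(A)$ into the conjugation action of $\GL(A)/\E(A)$ on $H_\ast(\E(A);\mZ)$ already handled by Lemma~\ref{LemHom1}.

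First I would recall that $\E(A)$ is a perfect normal subgroup of $\GL(A)$ (it is its own commutator subgroup), with quotient $K_1(A) = \GL(A)/\E(A)$. The connected covering of $B\GL(A)$ corresponding to the subgroup $\E(A) \subset \GL(A)$ is $B\E(A)$, with deck group $K_1(A)$. Applying Quillen's plus construction with respect to $\E(A)$ to $B\GL(A)$, and (unconditionally, since $\E(A)$ is perfect) to $B\E(A)$, one gets a compatible map $B\E(A)^+ \to B\GL(A)^+$ which remains a covering with the same deck group $K_1(A)$, and whose source is simply connected. This identifies $B\E(A)^+$ with the universal cover of $B\GL(A)^+$; I would invoke this as a standard fact from the literature on the plus construction rather than reprove it.

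Next, since the plus construction is a homology equivalence,
\[
H_\ast(B\E(A)^+;\mZ)\;\cong\; H_\ast(B\E(A);\mZ)\;=\; H_\ast(\E(A);\mZ),
\]
and under this identification the deck-transformation action of $\pi_1(B\GL(A)^+) = K_1(A)$ corresponds to the conjugation action of $\GL(A)/\E(A)$ on $H_\ast(\E(A);\mZ)$, defined by picking any representative $g \in \GL(A)$ for a coset $g\E(A)$. By Lemma~\ref{LemHom1} this action is trivial, giving the claim. The only non-formal step is the identification of the universal cover with $B\E(A)^+$; the rest is bookkeeping given Lemma~\ref{LemHom1}.
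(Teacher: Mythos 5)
Your proposal is essentially the same as the paper's proof: both reduce the statement to the triviality of the conjugation action on $H_\ast(\E(A);\mZ)$ established in Lemma~\ref{LemHom1}. The only cosmetic difference is that you explicitly identify the universal cover of $B\GL(A)^+$ with $B\E(A)^+$, whereas the paper sidesteps that identification by invoking the acyclicity characterisation of the plus construction (Weibel VI.1.1(2)) to pass directly from the $K_1(A)$-equivariant homology of the universal cover to the homology with local coefficients $H_\ast(B\GL(A),\mZ K_1(A))\simeq H_\ast(E\GL(A)/\E(A),\mZ)=H_\ast(\E(A);\mZ)$; the two routes carry exactly the same content.
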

\begin{proof}
By \cite[Definition~VI.1.1(2)]{Weibel}, we can consider instead the action of $K_1(A)$ on the homology with local coefficients 
$$H_\ast(B\GL(A),\mZ K_1(A))\;\simeq\;  H_\ast(E\GL(A)/\E(A),\mZ)\;\simeq\; H_\ast(\E(A);\mZ)$$
which can in turn be interpreted as the group homology from Lemma~\ref{LemHom1}.
\end{proof}

\begin{corollary}\label{CorSimp}
For any ring $A$, the space $B\GL(A)^+$ is simple.
\end{corollary}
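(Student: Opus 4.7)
The plan is to verify Sullivan's three defining conditions from Section~\ref{simple} for $X = B\GL(A)^+$ in turn. Condition (1) should be routine: $B\GL(A)$ is a connected CW complex as the classifying space of the discrete group $\GL(A)$, and Quillen's plus construction, which only attaches $2$- and $3$-cells, preserves both properties. Condition (2) follows from Whitehead's lemma $\E(A) = [\GL(A),\GL(A)]$ and the effect of the plus construction on $\pi_1$: one obtains $\pi_1(B\GL(A)^+) \cong \GL(A)/\E(A) = K_1(A)$, which is abelian.

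For condition (3), I would first identify the universal cover $\widetilde X$ with $B\E(A)^+$, which is simply connected because $\E(A)$ is perfect. The $K_1(A)$-action by deck transformations is then induced by the conjugation action of $\GL(A)$ on its normal subgroup $\E(A)$ (inner conjugation by elements of $\E(A)$ itself acting trivially on $B\E(A)^+$, as inner automorphisms induce self-maps homotopic to the identity on classifying spaces). Triviality of this action on $H_\ast(\widetilde X)$ is then exactly the content of Lemma~\ref{LemHom2}.

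The main obstacle is the remaining half of (3): triviality of the action on $\pi_\ast(\widetilde X)$. My plan is to replay the stabilization trick from Lemma~\ref{LemHom1}. Any $\alpha \in \pi_n(B\E(A)^+) \cong \varinjlim_m \pi_n(B\E_m(A)^+)$ comes from $\pi_n(B\E_m(A)^+)$ for some $m$, and any class in $K_1(A)$ is represented by some $g \in \GL_m(A) \subset \GL(A)$ after enlarging $m$. Identifying $g$ with $g \oplus 1 \in \GL_{2m}(A) \subset \GL(A)$, I would factor
$$g \oplus 1 \;=\; (g \oplus g^{-1}) \cdot (1 \oplus g).$$
The first factor $g \oplus g^{-1}$ lies in $\E(A)$ by Whitehead's lemma, so its conjugation action on $\E(A)$ is inner and induces a self-map homotopic to the identity on $B\E(A)^+$. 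The second factor $1 \oplus g$ commutes with the image of $\E_m(A) \hookrightarrow \E_{2m}(A)$ under $x \mapsto x \oplus 1$, so conjugation by it fixes $\E_m(A)$ pointwise. Combining the two, the deck transformation by $g$ acts as the identity on $\alpha$, completing the verification.
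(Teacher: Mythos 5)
Your verification of conditions (1), (2), and the homology half of (3) follows the paper's route: conditions (1) and (2) are by construction (with your supplementary remarks about cells and Whitehead's lemma being the standard justifications), and the homology action on $\widetilde{X}\simeq B\E(A)^+$ reduces to Lemma~\ref{LemHom2} exactly as in the paper, via the identification of the universal cover with $B\E(A)^+$.

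Where you genuinely diverge is the homotopy half of (3). The paper disposes of it by invoking the $H$-space structure on $B\GL(A)^+$ (a connected $H$-space has abelian $\pi_1$ acting trivially on all higher homotopy groups, cf.\ \cite[Exercise IV.1.11]{Weibel}), which is slick and also yields vanishing of Whitehead products. You instead replay the stabilization trick of Lemma~\ref{LemHom1} directly at the level of homotopy: pull $\alpha$ back to $\pi_n(B\E_m(A)^+)$, stabilize $g$ to $g\oplus 1$, factor it as the product of $g\oplus g^{-1}\in\E(A)$ (inner, hence acting trivially after plus) and $1\oplus g$ (which centralizes the stabilized copy of $\E_m(A)$, so the conjugation map restricts to the inclusion of $\E_m(A)$). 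This is a correct and more self-contained argument; it parallels the paper's homology argument and avoids the $H$-space input. The one ingredient you use in passing without comment is the isomorphism $\pi_n(B\E(A)^+)\cong\varinjlim_m\pi_n(B\E_m(A)^+)$, i.e.\ that the plus construction commutes with the filtered colimit $\E(A)=\varinjlim_m\E_m(A)$. This is true --- for $m\ge 3$ each $\E_m(A)$ is perfect, so each $B\E_m(A)^+$ is simply connected, and comparing the homotopy colimit $\varinjlim_m B\E_m(A)^+$ with $B\E(A)^+$ via integral homology (homology commutes with filtered colimits, and plus is a homology equivalence) gives the claim by Whitehead --- but it deserves a sentence or a reference, since it is the step that makes the colimit argument on homotopy groups (as opposed to homology groups, where it is immediate) legitimate.

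One remark on the paper itself that your write-up implicitly corrects: the paper's proof attributes the homotopy-action statement to Lemma~\ref{LemHom2} and the homology-action statement to the $H$-space structure, but Lemma~\ref{LemHom2} is explicitly about $H_\ast(\widetilde{X})$; the two phrases appear to be transposed. Your matching of Lemma~\ref{LemHom2} to the homology half is the correct one.
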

\begin{proof}
Conditions~\ref{simple}(1) and (2) are satisfied by construction. For (3), the action on homotopy groups is dealt with in Lemma~\ref{LemHom2}. For homology groups we can use the fact that $B\GL(A)$ is an $H$-space, see \cite[Exercise IV.1.11]{Weibel}, which implies the desired condition (more generally
 it is then known that all Whitehead products are trivial). 
\end{proof}

\subsection{Negative $K$-theory}

It is easy to construct examples of perfect $\mF_p$-algebras for which the strictly negative $K$-groups are not $p$-divisible.
For instance, using Mayer-Vietoris sequences one can prove the perfected version of \cite[Example~III.4.3.1]{Weibel}:
$$K_{-n}((\Delta_n(\mF_p))_\perf)\;\cong\; K_{-n}(\Delta_n(\mF_p))\;\cong\;\mZ,$$
for every $n\in\mZ_{>0}.$
However, we can recover the result on negative $K$-theory of perfections from \cite{EK, KM} as follows.

\begin{prop}\label{PropNeg}
Let $R$ be a commutative $\mF_p$-algebra. For every $i>0$, the morphism $p^i K_{-i}(Fr)$ on $K_{-i}(R)$ satisfies the properties in Lemma~\ref{LemEll}. In particular 
$$K_{-i}(R)[1/p]\;\to\; K_{-i}(R_{\perf})[1/p]$$
is an isomorphism.
\end{prop}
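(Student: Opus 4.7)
The plan is to deduce the proposition from the input of \cite{EK,KM} that $K(R)[1/p]\to K(R_\perf)[1/p]$ is an equivalence of spectra. First, I would note that since non-connective $K$-theory of commutative rings commutes with filtered colimits and $R_\perf = \varinjlim_{Fr} R$, applying $\pi_{-i}$ to this equivalence yields an isomorphism
$$K_{-i}(R)[1/p]\;\xrightarrow{\sim}\; K_{-i}(R_\perf)[1/p]\;\cong\;\varinjlim_{Fr}\bigl(K_{-i}(R)[1/p]\bigr),$$
where the composite identifies $K_{-i}(R)[1/p]$ with the $0$-th term of the telescope. This already establishes the ``in particular'' assertion.

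Next, I would verify the three conditions of Lemma~\ref{LemEll} for $\ell=p$ and $\theta=p^i K_{-i}(Fr)$. Conditions (b) and (c) are immediate from the factor $p^i$, using $i\ge 1$: taking $j=1$, we have $\theta(a)=p^i K_{-i}(Fr)(a)\in p K_{-i}(R)$ for any $a$, and if $pa=0$ then $p^i a=0$, whence $\theta(a)=K_{-i}(Fr)(p^i a)=0$. For condition (a) it is enough to show that $K_{-i}(Fr)[1/p]$ is an automorphism of $K_{-i}(R)[1/p]$, since $p^i$ is a unit after inverting $p$. This in turn follows from the displayed isomorphism together with the elementary observation that for an endomorphism $f$ of an abelian group $A$, the canonical inclusion $A\to\varinjlim_f A$ is an isomorphism if and only if $f$ is an automorphism of $A$ (injectivity of $f$ being equivalent to injectivity of the inclusion, and surjectivity of $f$ being a straightforward consequence of surjectivity of the inclusion applied to representatives of the form $(1,a)$).

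The only nontrivial ingredient is condition (a), which genuinely uses the deep input from \cite{EK,KM} and the compatibility of non-connective $K$-theory with filtered colimits of rings; the remaining arguments are essentially bookkeeping driven by the free factor of $p^i$ in $\theta$. There is no substantive obstacle beyond correctly unpacking what the Kelly--Morrow / Elmanto--Khan equivalence says about the action of Frobenius on individual negative $K$-groups.
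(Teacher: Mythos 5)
Your proof is logically correct, but it takes a fundamentally different route from the paper and, in doing so, misses the point of the section. The paper's stated goal (see the abstract and the sentence immediately preceding the proposition, ``we can recover the result on negative $K$-theory of perfections from \cite{EK, KM} as follows'') is to \emph{reprove} the negative $K$-theory statement without invoking \cite{EK,KM}, using only the elementary Adams-operation argument behind Theorem~\ref{Main}. You instead take the Kelly--Morrow / Elmanto--Khan equivalence as a black box, apply $\pi_{-i}$, and then verify the conditions of Lemma~\ref{LemEll} as an afterthought. That does establish the claim, but it is circular relative to the paper's purpose: once one assumes the input from \cite{EK,KM}, the ``in particular'' clause is a tautology, and conditions (b) and (c) are vacuous bookkeeping from the $p^i$ factor, as you say.

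The paper's actual proof is self-contained and uses a different mechanism. It factors the Frobenius on $R[t,t^{-1}]$ as the coefficient Frobenius $rt^k\mapsto r^pt^k$ followed by $rt^k\mapsto rt^{pk}$, and then uses Lemma~\ref{LemHae} (the behaviour of $K_{n-1}(A)\hookrightarrow K_n(A[t,t^{-1}])$ under the endomorphism $t\mapsto t^j$, which acts by multiplication by $j$) to show that $K_{1-i}(Fr)$ restricts to $pK_{-i}(Fr)$ on the summand $K_{-i}(R)$. Iterating $i$ times realises $p^iK_{-i}(Fr)$ as the restriction of $K_0(Fr)$ on a Laurent polynomial ring, landing in $\widetilde K_0$, at which point Theorem~\ref{Main} applies. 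The advantage is that this derives the negative-degree result from the paper's own elementary positive-degree result, giving a genuinely independent argument; your version buys nothing beyond restating what \cite{EK,KM} already prove. Two small checks you made correctly but implicitly: in the observation that $A\to\varinjlim_fA$ being an isomorphism forces $f$ to be an automorphism, your surjectivity argument uses the already-established injectivity of $f$ to cancel $f^m$ in the identity $f^{m+1}(b)=f^m(a)$; and you are tacitly using that non-connective $K$-theory commutes with the filtered colimit defining $R_\perf$ so that the map $K_{-i}(R)[1/p]\to\varinjlim_{Fr}(K_{-i}(R)[1/p])$ really is the canonical inclusion into the telescope. Both are fine, but if you want a proof in the spirit of this note you should replace the appeal to \cite{EK,KM} by the Laurent polynomial trick.
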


We will use the following well known lemma.

\begin{lemma}\label{LemHae}
For a ring $A$ and $n\le 1$, realise $K_{n-1}(A)$ as a direct summand in $K_n(A[t,t^{-1}])$ as in \cite[III.4]{Weibel}. For $j>0$ let $f_j$ be the algebra endomorphism of $A[t,t^{-1}]$, which preserves $A$ and satisfies $f_j(t)=t^j$. Then $K_{n}(f_j)$ restricts to the summand $K_{n-1}(A)\subset K_n(A[t,t^{-1}])$ as multiplication with $j$.
\end{lemma}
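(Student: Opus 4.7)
The plan is to recall that the splitting $K_{n-1}(A)\hookrightarrow K_n(A[t,t^{-1}])$ from \cite[III.4]{Weibel} is realised by external cup product with the class $[t]\in K_1(\mZ[t,t^{-1}])$, and then to exploit functoriality of this product together with the identity $[t^j]=j[t]$ in $K_1(\mZ[t,t^{-1}])$.

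First I would observe that $f_j$ factors as $f_j=\id_A\otimes g_j$, where $g_j:\mZ[t,t^{-1}]\to\mZ[t,t^{-1}]$ sends $t$ to $t^j$, under the identification $A[t,t^{-1}]\cong A\otimes_\mZ\mZ[t,t^{-1}]$. The external product $K_{n-1}(A)\otimes K_1(\mZ[t,t^{-1}])\to K_n(A[t,t^{-1}])$ is natural in each ring variable, so for any $x\in K_{n-1}(A)$ in the image of the splitting,
$$K_n(f_j)(x\cup[t])\;=\;x\cup K_1(g_j)([t])\;=\;x\cup [t^j].$$

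The key elementary input is that $[t^j]=j[t]$ in the abelian group $K_1(\mZ[t,t^{-1}])$, which follows from additivity of $K_1$ on products of units of a commutative ring, $[uv]=[u]+[v]$. Plugging this in and using bilinearity of $\cup$ gives $K_n(f_j)(x\cup[t])=j(x\cup[t])$, which is precisely the claim.

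The one point requiring care is that for $n\le 0$ the group $K_{n-1}(A)$ is itself defined by Bass delooping, i.e.\ iteratively through this very construction. I would handle this by induction on $-n$: at each stage the cup product with $[t]$ extends naturally to negative $K$-theory (cf.\ \cite[III.4.4]{Weibel}) and the computation above carries over verbatim, so no new ingredient is needed. I do not expect the induction itself to be an obstacle; the only mild point is being precise about the external product in negative degrees, which is spelled out in Weibel's book.
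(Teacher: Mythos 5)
Your proof is correct and rests on the same two ingredients as the paper's: the base case computation (the class $[t]$ in $K_1$ satisfies $[t^j]=j[t]$, forcing the summand map to scale by $j$) and downward induction to cover $n\le 0$. Where you diverge is the packaging of the induction. The paper stays within connective $K$-theory by embedding
$$K_{n-1}(A)\subset K_n(A[t,t^{-1}])\cap K_n(A[s,s^{-1}])\subset K_{n+1}(A[t,t^{-1},s,s^{-1}]),$$
adding a second Laurent variable $s$ at each step so that the $n=1$ case can be reused with $A$ replaced by $A[s,s^{-1}]$, and then peeling the extra variable off by injectivity of $\cup[s]$. You instead invoke directly the fact that the external product with $[t]\in K_1(\mZ[t,t^{-1}])$ and its naturality extend to negative degrees, which makes the computation uniform across all $n\le 1$ at the cost of citing the extension of products to nonconnective $K$-theory. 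Both are fine; the paper's two-variable trick is more elementary in that it only uses the positive-degree product, while yours is arguably cleaner to state once one accepts the negative-degree product structure from Weibel. Either way, the essential content — functoriality of the $\cup[t]$ splitting plus $[t^j]=j[t]$ — is identical.
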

\begin{proof}
First we prove the case $n=1$. In this case, we have
$$K_0(A)\hookrightarrow K_1(A[t,t^{-1}]),\quad [P]\mapsto (A[t,t^{-1}]\otimes_R P,t),$$
from which the claim follows easily. Subsequently, we can use the standard trick of realising 
$$K_{n-1}(A)\subset K_n(A[t,t^{-1}])\cap K_n(A[s,s^{-1}])\subset K_{n+1}(A[t,t^{-1},s,s^{-1}]),$$
to perform iteration.
\end{proof}

\begin{proof}[Proof of Proposition~\ref{PropNeg}]
We realise $Fr$ on $R[t,t^{-1}]$ as the composition of
$r t^i\mapsto r^p t^i$ and $r t^i\mapsto r t^{pi}$.
By naturality and Lemma~\ref{LemHae}, we have a commutative diagram
$$\xymatrix{
K_{-i}(R)\ar[d]^{pK_{-i}(Fr)} \ar@{^{(}->}[rr]&& K_{1-i}(R[t,t^{-1}])\ar[d]^{K_{1-i}(Fr)}\\
K_{-i}(R) \ar@{^{(}->}[rr]&& K_{1-i}(R[t,t^{-1}])
}.$$
By iteration we can then realise $p^i K_{-i}(Fr)$ as a restriction of $K_0(Fr)$ on some commutative $\mF_p$-algebra. It is well-known (but also follows from the above) that $K_{-i}(R)$ actually lives in $\widetilde{K}_0$, so the conclusion follows from Theorem~\ref{Main}.
\end{proof}
\subsection*{Acknowledgements}
The author thanks Elden Elmanto for pointing out \cite{EK} and Christian Haesemeyer for explaining Lemma~\ref{LemHae}.

\end{document}